\newtheorem{thm}[subsection]{Theorem}
\newtheorem{prop}[subsection]{Proposition}
\newtheorem{lem}[subsection]{Lemma}
\newtheorem{rem}[subsection]{Remark}
\theoremstyle{definition}
\newtheorem{Def}[subsection]{Definition}
\newtheorem{Not}[subsection]{Notation}
\newtheorem{proposition-definition}[subsection]{Proposition-Definition}
\newcommand{\ZZ}{{\mathbb Z}}
\newcommand{\QQ}{{\mathbb Q}}
\newcommand{\PP}{{\mathbb P}}
\newcommand{\NN}{{\mathbb N}}
\newcommand{\OOO}{{\mathcal O}}
\newcommand{\LLL}{{\mathcal L}}
\newcommand{\SSS}{{\mathcal S}}
\numberwithin{equation}{section}
\author{F. Laytimi}
\address {F. L.: Math\'ematiques - b\^{a}t. M2, Universit\'e Lille 1,
F-59655 Villeneuve d'Ascq Cedex, France}
\email {fatima.laytimi@math.univ-lille1.fr}
\author{W. Nahm}
\address{W. N.: Dublin Institute for Advanced Studies,
10 Burlington Road, Dublin 4, Ireland}
\email{wnahm@stp.dias.ie}
\subjclass{14F17}
\title{  On $k$-ampleness equivalence}
\begin{document}

\date{}

\begin{abstract} 

For a partition $a$ and a vector bundle $E$ on a projective variety $X$ let $\mathcal{F}l_s(E)$ be the corresponding flag manifold. There is a line bundle $\it Q_a^s$ on $\mathcal{F}l_s(E)$ with
 $p:\mathcal{F}l_s(E)\to X $ and $\it p_*Q_a^s= \SSS_aE $ 
 We prove, if  
$ \SSS_aE $ is  $k$-ample (in the sense of Sommese) then  $\it Q_a^s$ is $k$-ample. For the inverse if  $\it Q_a^s$ is $k$-ample,  we prove that one of two the conditions of k-ampleness namely the cohomological vanishing is proved here  but not yet the condition of  semiamplenes of $\SSS_aE $ .
 
\end{abstract}

\maketitle

\section{Introduction} \setcounter{page}{1}
Let  $E$ be  a complex vector bundle  of rank $d $  on a projective manifold $X$
and $s$ be a sequence of integers $s=(s_0,s_1,\ldots s_m)$ such that  $0=s_0<s_1<\ldots<s_m=d.$ Over $X$ consider 
the manifold $\mathcal {F}l_s(E)$ of incomplete flags given by nested subspaces $V_i$ of codimensions $s_i$ of the 
fibers $V$ of $E$. 
$$V=V_{s_0}\supset V_{s_1}\supset\ \ldots \supset V_{s_m}=\{0\}.$$

Let $a=(a_1,\ldots,a_d)$ be a partition such that  for  $0<j<d$
one has $a_j>a_{j+1}$ for $j\in s$ and $a_j=a_{j+1}$ otherwise.
There is a corresponding line bundle $\it Q_a^s$ over 
$\mathcal {F}l_s(E),$ locally given by 
 $$ \it Q_a^s= \it  Q^{a_{s_1}}_1\otimes \it Q^{a_{s_2}}_2\otimes \ldots \otimes \it Q^{a_{s_m}}_m,$$
where
$$ \it {Q_j}=\det(V_{j-1}/V_j),\ \  1\leq j\leq m.$$
When restricted to a fiber $\mathcal {F}l_s(V)$, the line bundle $\it Q_a^s$ is very ample.
Let $\pi: \mathcal {F}l_s(E)\rightarrow X$ be the natural projection. One has
\begin{equation} \pi_*\it Q_a^s = \SSS_{a}E \\
 \hskip1.5cm R^q\pi_*\it Q_a^s = 0\ \ \mathit{for}\ \ q>0,\hskip2cm 
\end{equation}
\label{pi}
where $\SSS_{a}$ is the Schur functor associated to the partition $a$.

We will use (\cite{semiample}, Lemma(3.3)) to reformulate 
the  following definition of Sommese,  see Prop. 1. 7 in \cite {Sommese}:

\begin{Def} \label{Def} A vector bundle $E$ is k ample if \\
1) E is semiample and\\
2)  Given any coherent sheaf $\mathcal{F}$ on $X$, there exists $N(\mathcal{F})\in\NN$  such that for $q>k$ and 
 any $n\geq N(\mathcal{F})$ one has
 $$ H^q(X,E^{\otimes n} \otimes \mathcal{F})=0. $$
\end{Def}

\smallskip

Here we consider the 

\bigskip 

{\bf{Conjecture}}: \label{con} The line bundle $\it Q_a^s$ on 
$\mathcal{F}l_s(E)$ is $k$-ample if and only if  the vector bundle 
$\SSS_{a}E$ on $X$  is $k$-ample.\\

In \cite{ample} we proved this conjecture  for $k=0$, that is for ample bundles.

We will prove a weak version of this conjecture.

\section{The proof}

\begin{thm}
 If  $\SSS_{a}E$ is   $k$-ample, then $\it Q_a^s$ is  $k$-ample.
\end{thm}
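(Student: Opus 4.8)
The plan is to verify the two conditions of Definition~\ref{Def} for $Q_a^s$. The basic observation is that for every $n\ge 1$ the partition $na=(na_1,\dots,na_d)$ is again admissible (the inequalities $a_j>a_{j+1}$ resp.\ $a_j=a_{j+1}$ are unchanged under scaling), so $(Q_a^s)^{\otimes n}=Q_{na}^s$, and the formula (1) applied to $na$ gives $\pi_*Q_{na}^s=\SSS_{na}E$ and $R^q\pi_*Q_{na}^s=0$ for $q>0$. The one non-formal ingredient I will use is the classical fact that $\SSS_{na}E$ is, functorially in $E$, a direct summand of $(\SSS_aE)^{\otimes n}$, and likewise of $S^n(\SSS_aE)$: indeed $\SSS_{(n-1)a}\otimes\SSS_a$ contains $\SSS_{na}$ with Littlewood--Richardson multiplicity $1$ (the partition $na$ being the maximal one in dominance order that occurs), and induction on $n$ does the rest; the decomposition is compatible with applying everything to the bundle $E$.

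For semiampleness I would argue as follows. Since $\SSS_aE$ is semiample, $S^m(\SSS_aE)$ is globally generated for some $m>0$, hence so is its direct summand $\SSS_{ma}E$, and therefore so is the pullback $\pi^*\SSS_{ma}E$ on $G:=\mathcal{F}l_s(E)$. Because $Q_a^s$ restricts to a very ample line bundle on every fibre $\mathcal{F}l_s(V)$ and $R^{>0}\pi_*Q_{ma}^s=0$, cohomology and base change show that the evaluation map $\pi^*\pi_*(Q_a^s)^{\otimes m}=\pi^*\SSS_{ma}E\to (Q_a^s)^{\otimes m}$ is surjective. Thus $(Q_a^s)^{\otimes m}$ is a quotient of a globally generated sheaf, hence globally generated, so $Q_a^s$ is semiample.

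For the cohomological vanishing, fix a coherent sheaf $\FFF$ on $G$. As $Q_a^s$ is $\pi$-very ample, for $c\gg 0$ the sheaf $\FFF\otimes(Q_a^s)^{\otimes c}$ is $\pi$-globally generated, giving a surjection $\pi^*\GGG\otimes(Q_a^s)^{\otimes(-c)}\twoheadrightarrow\FFF$ with $\GGG=\pi_*\bigl(\FFF\otimes(Q_a^s)^{\otimes c}\bigr)$ coherent on $X$. Iterating on the successive kernels produces a (possibly infinite) resolution $\cdots\to F_1\to F_0\to\FFF\to 0$ with $F_j=\pi^*\GGG_j\otimes(Q_a^s)^{\otimes(-c_j)}$, $\GGG_j$ coherent on $X$ and $c_j\ge 0$. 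Tensoring by the line bundle $Q_{na}^s$ keeps this exact; splitting into short exact sequences, chasing the long exact sequences, and using $H^i(G,-)=0$ for $i>\dim G$, one sees that for $q>k$ it suffices to have $H^{q+j}(G,F_j\otimes Q_{na}^s)=0$ for the finitely many $j$ with $0\le j\le\dim G-q$. By the projection formula and $R^{>0}\pi_*Q_{(n-c_j)a}^s=0$,
\[
H^{q+j}\bigl(G,F_j\otimes Q_{na}^s\bigr)=H^{q+j}\bigl(G,\pi^*\GGG_j\otimes Q_{(n-c_j)a}^s\bigr)=H^{q+j}\bigl(X,\GGG_j\otimes\SSS_{(n-c_j)a}E\bigr),
\]
and since $\SSS_{(n-c_j)a}E$ is a direct summand of $(\SSS_aE)^{\otimes(n-c_j)}$ this is a direct summand of $H^{q+j}\bigl(X,\GGG_j\otimes(\SSS_aE)^{\otimes(n-c_j)}\bigr)$; as $q+j>k$ and $\SSS_aE$ is $k$-ample, the latter vanishes once $n-c_j\ge N(\GGG_j)$. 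Setting $N(\FFF):=\max_j\bigl(N(\GGG_j)+c_j\bigr)$ over the relevant $j$ then gives $H^q(G,\FFF\otimes(Q_a^s)^{\otimes n})=0$ for all $q>k$ and $n\ge N(\FFF)$, which is condition 2).

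I do not expect a genuine obstacle: this is the easy implication. The only point needing care is the truncation of the infinite resolution to finitely many terms for each fixed $q>k$, so that a single bound $N(\FFF)$ suffices; everything else is the projection formula together with the functorial splitting $\SSS_{na}E\mid(\SSS_aE)^{\otimes n}$, which is exactly what reduces the statement about the Schur power to the assumed $k$-ampleness of $\SSS_aE$ itself. (By contrast, the converse is harder precisely because semiampleness of $\SSS_aE$ cannot be recovered from that of $Q_a^s$ by this argument.)
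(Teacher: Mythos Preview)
Your argument is correct, but it takes a considerably longer route than the paper. The paper's proof is three lines: by definition, $\SSS_aE$ being $k$-ample means that $\OOO_{\PP(\SSS_aE)}(1)$ is $k$-ample; the fibrewise very ampleness of $Q_a^s$ yields a Pl\"ucker-type closed embedding $p:\mathcal{F}l_s(E)\hookrightarrow \PP(\pi_*Q_a^s)=\PP(\SSS_aE)$ with $p^*\OOO(1)=Q_a^s$; and $k$-ampleness of line bundles is inherited by closed subvarieties (Sommese). What you do instead is verify the two clauses of Definition~\ref{Def} directly, using the $\pi$-relative ampleness of $Q_a^s$ to resolve an arbitrary coherent sheaf on $\mathcal{F}l_s(E)$ by sheaves of the form $\pi^*\GGG_j\otimes (Q_a^s)^{-c_j}$, and then the direct-summand inclusion $\SSS_{na}E\mid(\SSS_aE)^{\otimes n}$ to reduce to the hypothesis on $X$. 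Both approaches ultimately rest on the same embedding (your ``$\pi$-very ample'' is exactly $Q_a^s=p^*\OOO(1)$), but the paper packages the passage from $\PP(\SSS_aE)$ to $\mathcal{F}l_s(E)$ into a single invocation of ``$k$-ample restricts'', whereas you rebuild that restriction property by hand via resolutions and Leray. The payoff of your approach is that it is self-contained and gives an explicit recipe for $N(\FFF)$; the payoff of the paper's is brevity.
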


\begin{proof}
 Assume that  $\SSS_{a}E$ is $k$-ample. Thus by definition
 $\OOO_{\PP(\SSS_{a}E)}(1)$ is $k$-ample. The restriction of
 $\it Q_a^s$ to the fibers of $\mathcal {F}l_s(E)$ is very ample. Thus $\it Q_a^s$ yields an embedding generalizing the one of Plucker: 
 $$p:\mathcal {F}l_s(E) \hookrightarrow \PP(\pi_*\it Q_a^s) = \PP(\SSS_{a}E).$$
  The corresponding
 restriction of the $k$-ample line bundle $\OOO_{\PP(\SSS_{a}E)}(1)$ is $k$-ample, too, 
 and this is just $\it Q_a^s$ itself.
 \end{proof}

 \begin{rem}
The proof here is much shorter than  in \cite{ample} for $k=0$	
 \end{rem}

\begin{thm} \label{thm2} assume that
$\it Q^a_s$  is $k$-ample, then $\SSS_{a}E)$ satisfies  the condition 2) in Def  \ref{Def}:

 Given any coherent sheaf $\mathcal{F}$ on $X$, there exists $N(\mathcal{F})\in\NN$  such that for $q>k$ and 
 any $n\geq N(\mathcal{F})$ one has
 $$ H^q(X,(\SSS_{a}E)^{\otimes n} \otimes \mathcal{F})=0. $$
 \end {thm}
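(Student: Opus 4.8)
The plan is to push the cohomological vanishing for $\it Q_a^s$ down along $\pi$ to $X$ using the projection formula together with the degeneracy of the Leray spectral sequence. The key observation is that for any $n\geq 1$ we have $(\it Q_a^s)^{\otimes n}=\it Q_{na}^s$, which is again a line bundle of the type considered, associated to the partition $na=(na_1,\ldots,na_d)$ (with the same jump set $s$); hence by equation (1.1) one gets
\begin{equation*}
\pi_*\bigl((\it Q_a^s)^{\otimes n}\bigr)=\SSS_{na}E=\SSS_{a}^{(n)}E,\qquad R^q\pi_*\bigl((\it Q_a^s)^{\otimes n}\bigr)=0\ \ \text{for } q>0,
\end{equation*}
where $\SSS_{na}E$ denotes the Schur bundle for the dilated partition. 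The difficulty is that $\SSS_{na}E$ is \emph{not} equal to $(\SSS_aE)^{\otimes n}$; rather, by the Littlewood--Richardson rule $(\SSS_aE)^{\otimes n}$ decomposes as a direct sum of Schur bundles $\SSS_bE$ with $b$ ranging over a finite set of partitions, one of which (the ``highest'' one, obtained by concatenation/addition) is $\SSS_{na}E$, and all others $\SSS_bE$ appearing are, on each flag variety, quotients or subquotients controlled by line bundles $\it Q_b^{s'}$ that are again $k$-ample once $\it Q_a^s$ is. So the real content is to realize every summand $\SSS_bE$ of $(\SSS_aE)^{\otimes n}$ as a direct image of a $k$-ample line bundle on an appropriate flag bundle.

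Concretely, I would proceed as follows. First, fix a coherent sheaf $\FFF$ on $X$ and pull it back to $\GGG:=\mathcal{F}l_s(E)$; since $\pi$ is a smooth projective morphism with $R^q\pi_*\it Q_{a}^s=0$ for $q>0$ (and likewise for all dilates, as above), the projection formula gives $R^q\pi_*\bigl((\it Q_a^s)^{\otimes n}\otimes\pi^*\FFF\bigr)=\SSS_{na}E\otimes\FFF$ for $q=0$ and $0$ for $q>0$, whence by the Leray spectral sequence
\begin{equation*}
H^q\bigl(X,\SSS_{na}E\otimes\FFF\bigr)\;\cong\;H^q\bigl(\GGG,(\it Q_a^s)^{\otimes n}\otimes\pi^*\FFF\bigr).
\end{equation*}
Now $\it Q_a^s$ is $k$-ample on $\GGG$, so by Definition \ref{Def}(2) applied with the coherent sheaf $\pi^*\FFF$ there is an $N_0=N(\pi^*\FFF)$ such that the right-hand side vanishes for $q>k$ and $n\geq N_0$. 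This already yields the vanishing $H^q(X,\SSS_{na}E\otimes\FFF)=0$ for $q>k$, $n\geq N_0$ — i.e.\ the condition 2) but for the \emph{dilated} Schur bundles $\SSS_{na}E$ rather than for the tensor powers $(\SSS_aE)^{\otimes n}$. The second, and I expect harder, step is to bridge this gap: write $(\SSS_aE)^{\otimes n}=\bigoplus_b \SSS_bE^{\oplus m_{b,n}}$ and show each $\SSS_bE$ satisfies condition 2) uniformly. For each such $b$ the partition may fail to have jumps exactly along $s$, so one passes to the finer flag bundle $\mathcal{F}l_{s(b)}(E)$ on which the corresponding $\it Q_b^{s(b)}$ lives; the point is that $\it Q_b^{s(b)}$ is still $k$-ample. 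I would argue this by exhibiting, on a common refinement $\mathcal{F}l_{t}(E)$ dominating both flag bundles, a morphism under which $\it Q_b^{t}$ is a \emph{quotient} (or a summand in an appropriate filtration) of a tensor power of the pullback of $\it Q_a^s$, using the explicit description $\it Q_a^s=\bigotimes_j Q_j^{a_{s_j}}$ in terms of the tautological quotient line bundles $Q_j=\det(V_{j-1}/V_j)$ and the fact that $k$-ampleness is inherited by restrictions to subvarieties (as used already in the proof of Theorem 2.1) and by quotient line bundles under surjections of cohomologically trivial direct images.

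Assembling: given $\FFF$, let $B$ be the finite set of partitions $b$ occurring in $(\SSS_aE)^{\otimes n}$ for \emph{some} $n$ up to reindexing (one reduces to finitely many ``shapes'' since $(\SSS_aE)^{\otimes n}=\SSS_{na}E\oplus(\text{lower terms})$ and the lower terms for all $n$ are governed by a bounded combinatorial pattern), apply Step 1 to each relevant $\it Q_b^{s(b)}$ and the sheaf $\pi_{s(b)}^*\FFF$ to get a threshold $N_b(\FFF)$, and set $N(\FFF):=\max_b N_b(\FFF)$. Then for $q>k$ and $n\geq N(\FFF)$ every summand $\SSS_bE\otimes\FFF$ of $(\SSS_aE)^{\otimes n}\otimes\FFF$ has vanishing $H^q$, hence so does the direct sum, giving $H^q(X,(\SSS_aE)^{\otimes n}\otimes\FFF)=0$ as required. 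The main obstacle, and the step I would scrutinize most carefully, is controlling the ``lower terms'' of the Littlewood--Richardson decomposition uniformly in $n$ — specifically, checking that only finitely many flag-bundle geometries occur and that on each the relevant tautological line bundle is genuinely $k$-ample (not merely nef), since $k$-ampleness, unlike ampleness, is not obviously preserved under all the operations one wants; if that uniformity fails one must instead argue directly on a single large flag bundle $\mathcal{F}l_t(E)$ that dominates everything in sight, trading the combinatorial bookkeeping for a single application of Definition \ref{Def}(2) there.
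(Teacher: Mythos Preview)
Your Step 1 is correct and matches the paper's use of (1.1) together with the Leray spectral sequence. The gap is entirely in Step 2, and it is a real one.

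First, the claim that the partitions $b$ occurring in $(\SSS_aE)^{\otimes n}$ reduce to ``finitely many shapes'' is false in the sense you need. Take $a=(1)$, so $\SSS_aE=E$; then every partition $b$ of $n$ with at most $d$ parts occurs in $E^{\otimes n}$, and these are not dilates of any finite set. So there is no finite family of flag geometries $\mathcal{F}l_{s(b')}(E)$ such that every summand $\SSS_bE$ is $\SSS_{mb'}E$ for some $b'$ in that family. Second, even granting such a finite family, deducing $k$-ampleness of $Q_{b'}^{s(b')}$ from that of $Q_a^s$ via ``quotients or subquotients of tensor powers of pullbacks'' is not available: $k$-ampleness is not known to pass through such constructions, and on a common refinement $\mathcal{F}l_t(E)$ the pullback of $Q_a^s$ is just $Q_a^t$, so nothing is gained. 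Third, and most basically, even if $Q_{b'}^{s(b')}$ were $k$-ample, applying Definition~\ref{Def}(2) to it yields vanishing for $\SSS_{mb'}E\otimes\FFF$ with $m$ large, not for the specific $\SSS_bE\otimes\FFF$ sitting inside $(\SSS_aE)^{\otimes n}\otimes\FFF$; your assembly paragraph conflates these.

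The paper repairs exactly this point, but by a different mechanism. It works not on $\mathcal{F}l_s(E)$ but on the $\mu$-fold fibre product $Y_\mu=\mathcal{F}l_s(E)\times_X\cdots\times_X\mathcal{F}l_s(E)$, $\mu=\mathrm{lcm}(1,\ldots,d)$, where the external product $Q_\mu^a=\pi_1^*Q_a^s\otimes\cdots\otimes\pi_\mu^*Q_a^s$ is again $k$-ample and, by K\"unneth and (1.1), the $l$-th power pushes forward to $(\SSS_{la}E)^{\otimes \mu}$ --- a genuine tensor power, not merely the dilate $\SSS_{la}E$ that your Step~1 produces. The combinatorial core (Lemma~\ref{inc}, proved via Lemmas~\ref{finite}, \ref{vinc} from \cite{ample} and the semigroup property of Littlewood--Richardson) is that every irreducible summand $\SSS_bE$ of $(\SSS_aE)^{\otimes n}$ embeds as a subfactor of $(\SSS_{la}E)^{\otimes \mu}\otimes\SSS_fE$ for some $f$ in a fixed \emph{finite} set $\sigma(a)$ and suitable $l$ growing with $n$. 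One then applies Definition~\ref{Def}(2) on $Y_\mu$ to the finitely many coherent sheaves $\pi^*(\SSS_fE\otimes\FFF)$, $f\in\sigma(a)$, and takes the maximum threshold. The finiteness lives in the twisting sheaves $\SSS_fE$, not in the ``shapes'' of the $b$'s, and no auxiliary $k$-ampleness of other tautological line bundles is ever invoked.
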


\begin{proof}

Let us  denote  $\mathcal {F}l_s(E)= \it Y.$ Let $\mu=lcd \{1,2,\ldots d\}$ and consider\\
  $\it Y_\mu= \underbrace{\it Y\times _X \ldots \times_X \it Y }_{\mu\ times}$\ \ 
$\pi_i :\it Y_\mu\longrightarrow Y $\ for $i=1,\ldots,\mu$,\ \ 
 $\pi :\it Y_\mu\longrightarrow X. $ 
 
 Since $\it Q^a_\mu=\pi^*_1 \it Q^a_s\otimes  \ldots \otimes \pi^*_\mu \it Q^a_s$ is  $k$-ample when restricted 
 to the fibers of $\pi$, it is $k$-ample by Proposition (1.8) of \cite{Sommese}.   
 
 According to Definition  \ref{Def} for any coherent sheave $\mathcal{F}$ on $X$ there exists $l_0(\mathcal{F}) $ 
 so that  for any $l\geq l_0(\mathcal{F})$ and any $q>k$
 $$ H^q(\it Y_\mu, \it (Q^a_\mu)^l\otimes \pi^*\mathcal{F})=0. $$
 
By the K\"unneth formula and (1.1) we get 
 $$ H^q(\it Y_\mu, \it (Q^a_\mu)^l\otimes \pi^*\mathcal{F}) 
 \simeq  H^q (X,(\SSS_{la}E)^{\otimes \mu}\otimes \mathcal{F}).$$
 
To prove Theorem \ref{thm2} it suffices to prove the following lemma.

\begin{lem}\label{inc}
 For any partition $a$ there is a finite set of partitions $\sigma(a)$ such that for any $n$
 each irreducible subfactor  $\SSS_bE$ of $(\SSS_aE)^{\otimes n}$ is isomorphic to a subfactor of
 $(\SSS_{la}E)^{\otimes \mu}\otimes \SSS_fE$, where $f\in\sigma(a)$ and $n|a|=\mu l|a|+|f|$. 
\end{lem}

Let
$$n_ 0(\mathcal{F})={\it max}_{f\in\sigma(a)}\{\mu l_0(\SSS_fE\otimes \mathcal{F})+[|f|/|a|]\},$$
where $[x]$ denotes the integral part of $x$.

Granted lemma \ref{inc} one finds for $n\geq n_ 0(\mathcal{F})$ and $q>k$ that
  $$ H^q(X, (\SSS_aE)^{\otimes n}\otimes \mathcal{F})=0. $$
 Thus $\SSS_{a}E$ satisfises the condtion 2) in \ref{Def}.

It remains to proof Lemma \ref{inc}.

Recall that the weight $|a|$ of a partition $a=(a_1,a_2,\ldots)$ is the sum of its terms,
$|a|=a_1+a_2+\ldots.$ The dominance partial ordering of partitions is defined in 
\cite{Fulton} by:\\
Let $a=(a_1,a_2,\ldots a_d) $ and $ b=(b_1,b_2,\ldots b_d)$ be two partitions.\\ 
Then \ \ $a \preceq b$ \ \ if
\begin{align}
\nonumber a_1&\leq b_1 \\
\nonumber a_1+a_2&\leq b_1+b_2\\
\nonumber \ldots&\leq\ldots\\
\nonumber a_1+\ldots +a_{d-1}&\leq b_1+\ldots +b_{d-1}.\\
\nonumber a_1+\ldots +a_{d-1}+a_d&= b_1+\ldots +b_{d-1}+b_d.
\end{align}

  We extend this definition to non-zero partitions of not necessarily equal weight:

 $$a \preceq b \ \ \ \ \  {\mbox{if} } \ \  \ \ \ |b|\ a \ \preceq |a| \ \ b.$$

In this terminology a well-known property of the\\ Littlewood-Richardson rules is
\begin{prop}\label{b}
For each irreducible subfactor  $\SSS_bE$ of $(\SSS_aE)^{\otimes n}$ one has $b\preceq a$. 
\end{prop}

 \begin{Not}\label{not3}
For finite sequences $b=(b_1,\ldots,b_s)$ and  $c=(c_1,\ldots c_t)$ we set  
$$b\vee c  = (b_1,\ldots,b_s,c_1,\ldots c_t).$$
For a given positive integer $d$ let 
$$\LLL(d)= \{(l_1,\ldots,l_r) \ | \ r,\ l_i\in \NN ,\  l_1+l_2+\ldots+l_r=d  \}. $$

If $L=(l_1,\ldots,l_r)\in \LLL(d)$ and $a \in\QQ_{\geq 0}^d,$ then for $1\leq i\leq r,$
$a(L,i)$ is a sequence
of length $l_i,$ such that $a=a(L,1)\vee a(L,2)\ldots\vee a(L,r).$

Let 
$${\bf{1}}_{i}=(\underbrace{1,1,\ldots, 1}_{i\ \ times} )$$ and
 $$v(L,a)=\frac{|a(L,1)|}{l_1} {\bf{1}}_{l_1}\vee \ldots \vee \frac{|a(L,r)|}{l_r} {\bf{1}}_{l_r}.$$
 
 Note that for partitions $a$ the sequences $v(L,\mu a)$ are partitions.
 
\end{Not} 

Let $Z(a)$ be the set of partitions $b$ for which $b\preceq a$. In \cite{ample} we proved
\begin{lem} \label{finite} (\cite{ample}, Lemma (3.11))
 There is a finite set $\sigma(a)$ of partitions  such that if
  $b\in Z(a)$ then $b$ can be written as  $$b=c + \displaystyle{\sum_{L\in \LLL(d)} m_L\  v(L,\mu a)}, \ \ 
  c\in\sigma(a)$$ and $m_L\in\ZZ_{\geq 0}$ \ for all $L\in \LLL(d)$.
\end{lem}

\begin{lem} \label{vinc} (\cite{ample}, Lemma (3.12))
 Let $a$ be a partition. For any $L\in \LLL(d)$,\ \ 
 $\SSS_{v(L,\mu a)}$ is a subfactor of $ (\SSS_aE)^{\otimes \mu}.$
\end{lem}

The Littlewood-Richardson rules have a well-known semigroup property:
\begin{prop}\label{ZZ}
Let  $a,b,c,d, e, f$  be partitions of length $d$. If \\
 $c  \in  a\otimes b $ and  $f  \in d \otimes e,  $
then \ \ $  (c+f)  \in (a+d) \otimes ( b+e ) .$
 \end{prop}
 
 Together with lemma \ref{vinc} this property implies
 \begin{prop}\label{g}
  Let $g= \displaystyle{\sum_{L\in \LLL(d)} m_L\  v(L,\mu a)}$ and $M=\displaystyle{\sum_{L\in \LLL(d)} m_L}.$
Then $\SSS_gE$ is a subfactor of $(\SSS_{Ma}E)^{\otimes \mu}.$
 \end{prop}
 
 Propositions \ref{finite} and \ref{g} imply lemma \ref{inc}.
Thus theorem \ref{thm2} is proven, too.
\end{proof}

To prove the conjecture\ref{con},  we need to prove the semiampleness of $\SSS_E$ when  $\it Q_a^s$ is $k$-ample,  wich is missing here.

  \end{document}